\newtheorem{thm}{Theorem}
\newtheorem{lem}{Lemma}
\theoremstyle{definition}
\newtheorem{defn}{Definition} 
\newtheorem{ques}{Question}
\renewcommand{\Re}{\mathbb R}
\newcommand{\M}{\mathcal{M}}
\newcommand{\F}{\mathfrak{F}}
\DeclareMathOperator{\inter}{int}
\DeclareMathOperator{\bd}{bd}
\DeclareMathOperator{\conv}{conv}
\DeclareMathOperator{\dif}{d}
\begin{document}
\title{Ellipsoid characterization theorems}

\author[Z. L\'angi]{Zsolt L\'angi}
                                                                               
\address{Zsolt L\'angi, Dept. of Geometry, Budapest University of Technology,
Budapest, Egry J\'ozsef u. 1., Hungary, 1111}
\email{zlangi@math.bme.hu}
                                                                        
\thanks{{2010 \sl Mathematics Subject Classification.} 52A21, 52A20, 46C15}
\keywords{normed space, ellipsoid, tangent segment, billiard reflection, Busemann angular bisector,
Glogovskij angular bisector, billiard angular bisector.}

\begin{abstract}
In this note we prove two ellipsoid characterization theorems.
The first one is that if $K$ is a convex body
in a normed space with unit ball $M$, and for any point $p \notin K$
and in any $2$-dimensional plane $P$ intersecting $\inter K$ and containing $p$,
there are two tangent segments of the same normed length from $p$
to $K$, then $K$ and $M$ are homothetic ellipsoids.
Furthermore, we show that if $M$ is the unit ball of a strictly convex, smooth norm, and
in this norm billiard angular bisectors coincide with Busemann angular bisectors or Glogovskij
angular bisectors, then $M$ is an ellipse.
\end{abstract}
\maketitle

\section{Introduction}

The problem what geometric or other properties distinguish Euclidean spaces from the other normed spaces
is extensively studied in convex geometry and functional analysis (cf. for instance, the book \cite{A86}).
In this note we present two more such properties.

In our investigation, $\Re^n$ denotes the standard $n$-dimensional real linear space, with origin $o$.
For points $p, q \in \Re^n$, $[p,q]$ denotes the closed segment with endpoints $p$ and $q$,
and $[ p,q \rangle$ denotes the closed ray emanating from $p$ and containing $q$.
If $M$ is an $o$-symmetric convex body, then $\M$ is the norm induced by $M$, and $||p||_\M$ is the norm of $p$.
If it is obvious which norm we mean, we may write simply $||p||$.

Let $K$ be a convex body in $\Re^n$ with $n \geq 2$.
A closed nondegenerate segment $[p,q]$, contained in a supporting line of $K$
(also called a \emph{tangent line} of $K$) and with $q \in K$, is a \emph{tangent segment of $K$ from $p$}.
If for some norm $\M$ and convex body $K \subset \Re^n$, for any point $p \notin K$
and in any $2$-dimensional plane $P$ intersecting $\inter K$ and containing $p$,
there are two tangent segments from $p$ to $K$ of the same length in $\M$, 
we say that $K$ satisfies the \emph{equal tangent property} with respect to $\M$.
It is well-known that Euclidean balls have this property with respect to the standard Euclidean norm,
and it has been proved that, among convex bodies, only they have this property
(cf. \cite{GO99}).
On the other hand, Wu \cite{W08} showed that for any norm $\M$,
if the unit ball $M$ has the equal tangent property with respect to $\M$, then $M$ is an ellipsoid.

Our first theorem is a combination of these results.
We note that our definitions of tangent segment and equal tangent property are slightly more
general than the one in \cite{W08}.
On the other hand, for a strictly convex body $K$, equal tangent property with respect to $\M$
simplifies to the property that for any $p \notin K$, all tangents from $p$ to $K$ have
equal length with respect to $\M$.

\begin{thm}\label{thm:equaltangent}
Let $K \subset \Re^n$ be a convex body and let $\M$ be a norm.
If $K$ satisfies the equal tangent property with respect to $\M$, then $K$ and $M$ are homothetic ellipsoids.
\end{thm}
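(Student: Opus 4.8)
The plan is to reduce everything to a planar statement, to show there that the planar unit ball is an ellipse, and then to invoke the two quoted results for the global conclusion. First I would reduce to the case $n=2$. If $P$ is a $2$-plane meeting $\inter K$ and $P_{0}$ is its direction plane, then a segment lying in $P$ is a tangent segment of $K$ exactly when it is a tangent segment of the planar body $K\cap P$ in $P$: a line of $P$ avoiding $\inter K$ lies in a supporting hyperplane of $K$, and $\bd(K\cap P)\subset\bd K$. Hence the equal tangent property of $K$ with respect to $\M$ passes to each section $K\cap P$ with respect to the induced planar norm whose unit ball is $M\cap P_{0}$. Granting the planar case, for $n\geq 3$ every central $2$-section $M\cap P_{0}$ is an ellipse, so by the classical characterization of ellipsoids through their planar sections $M$ is an ellipsoid. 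A linear map then carries $M$ to a Euclidean ball and the norm to the Euclidean norm; since such a map sends tangent segments to tangent segments and multiplies all $\M$-lengths by a fixed factor, the image of $K$ has the equal tangent property in the Euclidean norm, so by \cite{GO99} it is a Euclidean ball. Undoing the map exhibits $K$ and $M$ as homothetic ellipsoids.

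So I focus on the plane. I would first check that the property forces $K$ and $M$ to be smooth and strictly convex, so that from each $p\notin K$ there are exactly two tangent segments, depending continuously on $p$, and the norm is differentiable off the origin. The clean relation comes from letting $p\to\infty$ along a direction $u$. Writing $p=x_{0}+tu$ and expanding
\[
||p-q||_{\M}=t\,||u||_{\M}+\langle\phi(u),\,x_{0}-q\rangle+O(1/t),\qquad \phi(u)=\nabla(||\cdot||_{\M})(u),
\]
where the two tangency points tend to the points $q_{+}(u),q_{-}(u)$ of $\bd K$ whose tangent lines are parallel to $u$, the equal tangent property makes the $t$-terms cancel and forces $\langle\phi(u),\,q_{+}(u)-q_{-}(u)\rangle=0$. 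Since $\phi(u)$ is the outer normal of $M$ at its boundary point in direction $u$, this says that the chord of $K$ joining the two points with tangent parallel to $u$ is parallel to the tangent line of $M$ at that boundary point. Prescribing the tangent direction of $\bd M$ as a function of the radial direction is a first order linear ODE for the radial function of $M$, so this relation determines $M$ from $K$ uniquely up to a homothety. In particular, if $K$ is an ellipse then, as one checks directly, the homothetic ellipse solves the relation, whence $M$ is that homothetic ellipse.

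Thus the whole problem is reduced to proving that $K$ is an ellipse, and this is the step I expect to be the main obstacle: the parallelism relation above uses only the leading order of the expansion and by itself constrains $K$ not at all. To pin $K$ down I would extract a second, local relation from the full equal tangent equation, most transparently by letting $p$ approach a boundary point $q_{0}$, where both tangency points and both tangent lengths tend to $0$; matching the $\M$-lengths to the next order should produce a relation between the curvature of $K$ at $q_{0}$ and the first derivative of the norm in the tangent direction there. Feeding the parallelism relation, which expresses $M$ through $K$, into this curvature relation yields a closed second order condition on the support function of $K$ whose only convex solutions are ellipses; an equivalent and perhaps cleaner finish would be to deduce from these relations that $M$ itself satisfies the equal tangent property and to quote \cite{W08} directly. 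Carrying out this analytic core, and in particular integrating the resulting equation to exclude every non-elliptic $K$, is where the real work lies.
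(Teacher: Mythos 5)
Your reduction to the plane and your treatment of the case $n>2$ are sound (and your route there is even a little different from the paper's: you get $M$ as an ellipsoid from its central sections, affinely normalize it to a Euclidean ball, and then invoke \cite{GO99} for $K$, whereas the paper applies Burton's section theorem \cite{B76} to both bodies at once). The regularity step is also in the spirit of the paper, which proves strict convexity by Wu's maximal-segment argument and smoothness by a homothety/triangle-inequality contradiction. But the heart of the theorem --- proving in the plane that $K$ is an ellipse --- is not proved in your proposal; it is only announced. Your asymptotic relation from $p\to\infty$ is correct but, as you yourself observe, it only expresses the tangent direction of $\bd M$ in terms of the affine diameters of $K$ and constrains $K$ not at all; and the proposed second-order expansion as $p$ approaches $\bd K$, the resulting ``closed second order condition on the support function,'' and the claim that its only convex solutions are ellipses are all left as intentions. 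That is exactly the analytic core of the theorem, so the proposal has a genuine gap rather than a complete alternative proof.

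For comparison, the paper closes this gap by a quite different and entirely first-order device: after normalizing so that a parallelogram circumscribed about $M$ with side-midpoints on $M$ becomes the unit square, and so that $\bd K$ touches its circumscribed axis-parallel square at $(1,0)$ and $(0,1)$, it applies the equal tangent condition at the two points where the tangent line of $K$ at $(x_0,y_0)$ meets the support lines $x=1$ and $y=1$. Eliminating the direction-dependent radius $\mu$ of $M$ between the two resulting equations produces the explicit ODE $y'(1-x)(x-y+1)+(1-y)(y-x+1)=0$ for the arc of $\bd K$, whose integral curves are classified in Lemma~\ref{lem:ellipse} as conic arcs $x^2+y^2-1+c(x-1)(y-1)=0$; a symmetry-and-analyticity patching argument then shows the four arcs lie on a single ellipse, after which $M$ is identified as a homothet by an axial-symmetry argument. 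If you want to complete your own plan, you would need to actually derive and integrate your curvature relation; the paper's choice of two specific finite points on each tangent line (rather than the limits $p\to\infty$ and $p\to\bd K$) is precisely what makes the equation tractable.
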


Our second theorem involves angular bisectors in normed planes.
Based on the properties of angular bisectors in the Euclidean plane, these bisectors can be defined
in several ways. Here we mention two of them (cf. \cite{B75} and \cite{G70}, and for a detailed discussion
on bisectors, \cite{MS04}).

\begin{defn}
Let $[a,a+b\rangle, [a,a+c\rangle \subset \Re^2$ be closed rays, and let $\M$ be a norm.
The \emph{Busemann angular bisector} of $[a,a+b\rangle$ and $[a,a+c\rangle$ with respect to $\M$ is
\[
A_B([a,a+b\rangle , [a,a+c\rangle) = \left[ a,a + \frac{b}{||b||} + \frac{c}{||c||} \right\rangle .
\]
\end{defn}

For the following definition we note that the distance of a point $p \in \M$ and a set $S \subset \M$ is
$\inf \{ ||p-q||_\M : q \in S \}$.

\begin{defn}
Let $[a,a+b\rangle, [a,a+c\rangle \subset \Re^2$ be closed rays, and let $\M$ be a norm.
The \emph{Glogovskij angular bisector} $A_G([a,a+b\rangle , [a,a+c\rangle)$
of $[a,a+b\rangle$ and $[a,a+c\rangle$ with respect to $\M$ is
the set of points in $\conv ([a,a+b\rangle, [a,a+c\rangle )$ that are equidistant from 
$[a,a+b\rangle$ and $[a,a+c\rangle$ in $\M$.
\end{defn}

We remark that a point $p$ of the convex hull of the two rays is equidistant from the rays if, and only if,
$p+\lambda M$ touches the rays for some $\lambda \geq 0$, and thus, for any two rays, their
Glogovskij bisector is a closed ray.

In this note we define another angular bisector with respect to a strictly convex, smooth norm.
To do this, we recall the notion of billiard reflection in a normed plane.

A billiard path in a Riemannian, or in particular in a Euclidean space,
is a piecewise smooth geodesic curve with the nonsmooth points
contained in the boundary of the space, where the smooth arcs are connected via the reflection law:
the angle of reflection is equal to the angle of incidence.
A generalization of billiard paths for Finsler geometries was introduced by Gutkin and Tabachnikov \cite{GT02},
who replaced the reflection law by the \emph{least action principle}, namely,
that two points are connected by the shortest piecewise smooth geodesic arc containing a point of the boundary.

Formally, let $\M$ be a strictly convex, smooth norm in $\Re^2$, $L$ be a line, and let
$[a,b\rangle$ and $[a,c\rangle$ be closed rays, with $a \in L$, that are not separated by $L$.
If $a$ is a critical point of the distance function $F(z) = ||z-b|| + ||z-c||$, where $z \in L$,
then we say that $[a,c\rangle$ is a \emph{billiard reflection} of $[a,b\rangle$ on $L$ with respect to $\M$.

Note that since $\M$ is strictly convex and smooth, the distance function has a unique critical point,
which is independent of the choice of $b$ and $c$.
Furthermore, for any two closed rays emanating from the same point,
there is a unique line on which they are a billiard reflection, which coincides with the tangent line
of any metric ellipse with its foci on the two rays. For reference on metric ellipses and conic sections
in normed planes, the interested reader is referred to \cite{HM11}.

Billiard reflections can be interpreted geometrically as well (cf. Corollary 3.2 and Proposition 5.1 of \cite{GT02}):
if $L$ is a line containing $o$, then $[o,c\rangle$ is a billiard reflection of $[o,b\rangle$ if, and only if,
the tangent lines of $M$ at $-\frac{b}{||b||}$ and $\frac{c}{||c||}$ meet on $L$ (cf. Figure~\ref{fig:reflection}).

We remark that billiard reflections in \cite{GT02} were defined only for norms with unit disks satisfying
the $C^\infty$ differentiability property, and thus, the authors of \cite{GT02} gave a geometric interpretation only for such norms.
On the other hand, recall that the subfamily of strictly convex $C^\infty$ norms is everywhere dense
in the family of strictly convex, smooth norms (cf. \cite{G07}).
Hence, by continuity, the equivalence of our definition with the geometric interpretation in Figure~\ref{fig:reflection} holds for
any strictly convex, smooth norm.

\begin{figure}[here]
\includegraphics[width=0.7\textwidth]{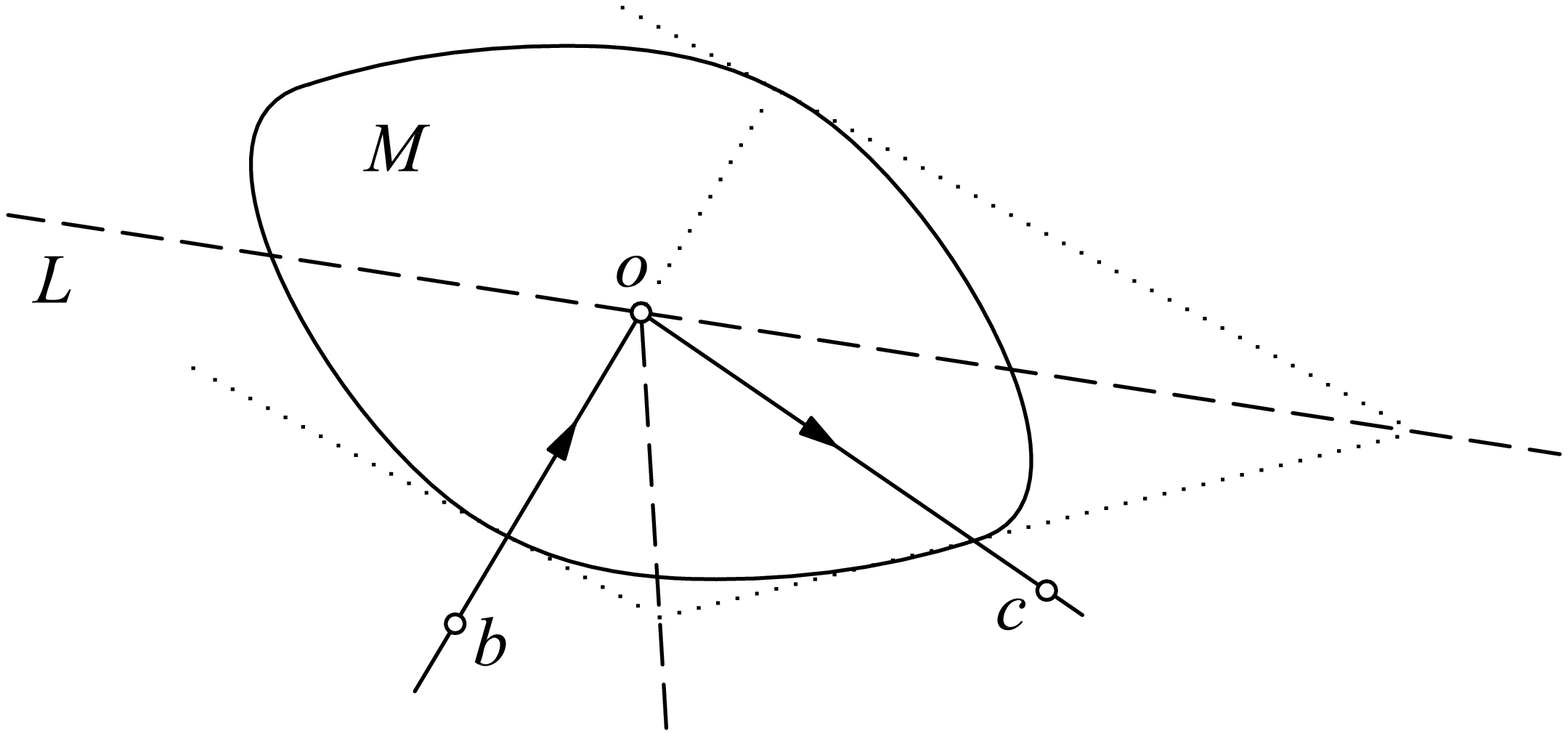}
\caption[]{}
\label{fig:reflection}
\end{figure}

\begin{defn}
Let $[a,b\rangle, [a,c\rangle \subset \Re^2$ and let $\M$ be a strictly convex, smooth norm.
We say that $L$ is an \emph{external billiard bisector} of $[a,b\rangle$ and $[a,c\rangle$
with respect to $\M$, if $[a,c\rangle$ is a billiard reflection of $[a,b\rangle$
on $L$ with respect to $\M$.
The closed ray of the external billiard bisector of $[a,b\rangle$ and $[a,2a-c\rangle$,
contained in $\conv([a,b\rangle \cup [a,c\rangle)$ is called the \emph{internal billiard bisector}
or shortly \emph{billiard bisector} of $[a,b\rangle$ and $[a,c\rangle$, denoted by
$A_b([a,b\rangle,[a,c\rangle )$.
\end{defn}

In Figure~\ref{fig:reflection}, the two dashed lines show the external and the internal billiard bisectors
of the rays $[o,b\rangle$ and $[o,c\rangle$.

Our second result is the following.

\begin{thm}\label{thm:billiard}
Let $\M$ be a strictly convex, smooth norm.
Then the following are equivalent. 
\begin{itemize}
\item[\ref{thm:billiard}.1] For every $[a,b\rangle, [a,c\rangle \subset \Re^2$, we have
$A_b([a,b\rangle, [a,c\rangle) = A_G([a,b\rangle, [a,c\rangle)$.
\item[\ref{thm:billiard}.2] For every $[a,b\rangle, [a,c\rangle \subset \Re^2$, we have
$A_b([a,b\rangle, [a,c\rangle) = A_B([a,b\rangle, [a,c\rangle)$.
\item[\ref{thm:billiard}.3] $M$ is an ellipse.
\end{itemize}
\end{thm}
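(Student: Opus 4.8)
The plan is to prove $\ref{thm:billiard}.3\Rightarrow\ref{thm:billiard}.1$ and $\ref{thm:billiard}.3\Rightarrow\ref{thm:billiard}.2$ directly, and then the two substantive implications $\ref{thm:billiard}.2\Rightarrow\ref{thm:billiard}.3$ and $\ref{thm:billiard}.1\Rightarrow\ref{thm:billiard}.3$; together these give all three equivalences. Throughout I would first normalize: all three bisectors are invariant under translations and under any linear map applied simultaneously to the configuration and to $\M$ (each is defined purely metrically), so I may take the common endpoint to be $o$ and record only the directions of the bisectors in terms of the unit vectors $u=b/||b||$ and $v=c/||c||$, which lie on $\bd M$. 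Writing $g(x)=||x||_\M$ for the gauge and $u^{*}=\nabla g(u)$ for the functional whose level line $\{x:\langle x,u^{*}\rangle=1\}$ is the tangent of $M$ at $u$ (so $\langle u,u^{*}\rangle=1$), and writing $h_M$ for the support function of $M$, a short computation gives the three directions: the Busemann bisector points along $u+v$; the billiard bisector points along the pole $P(u,v)$, the intersection of the tangents of $M$ at $u$ and $v$ (this is the geometric interpretation of the excerpt applied to the reflection defining the internal bisector, and $P(u,v)$ does lie in $\conv([o,u\rangle\cup[o,v\rangle)$); and, using $\dist_\M(p,\ell)=|\langle p,\xi\rangle|/h_M(\xi)$ for a line $\ell$ through $o$ with normal $\xi$, the Glogovskij bisector points along $h_M(v^{\perp})u+h_M(u^{\perp})v$, where $x^{\perp}$ denotes rotation of $x$ by a right angle. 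For $\ref{thm:billiard}.3\Rightarrow\ref{thm:billiard}.1,\ref{thm:billiard}.2$ I would map the ellipse $M$ to a disc by a linear map; in the Euclidean case each of the three bisectors is the classical angle bisector, so by the equivariance above they coincide for $\M$ as well.

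For $\ref{thm:billiard}.2\Rightarrow\ref{thm:billiard}.3$: the condition $P(u,v)\parallel u+v$ means $\mu(u+v)=P(u,v)$ for some scalar $\mu$, and since $\langle P,u^{*}\rangle=\langle P,v^{*}\rangle=1$ and $\langle u,u^{*}\rangle=\langle v,v^{*}\rangle=1$, this is equivalent to the symmetry $\langle v,u^{*}\rangle=\langle u,v^{*}\rangle$ for all $u,v\in\bd M$. Setting $f=g^{2}/2$, so that $\nabla f(v)=v^{*}$ on $\bd M$, and extending by the homogeneity of $\nabla f$, the identity becomes $\langle u,\nabla f(v)\rangle=\langle v,\nabla f(u)\rangle$ for all $u,v$. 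For each fixed $u$ the right-hand side is linear in $v$; hence every component of $\nabla f$ is linear, so $\nabla f$ is a linear map and $f$ is a positive definite quadratic form. Thus $M=\{f\le\tfrac12\}$ is an ellipse. Note that only $f\in C^{1}$, i.e. the smoothness of $M$, is used.

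For $\ref{thm:billiard}.1\Rightarrow\ref{thm:billiard}.3$: expanding $P(u,v)\parallel\bigl(h_M(v^{\perp})u+h_M(u^{\perp})v\bigr)$ and using $\langle u,u^{*}\rangle=\langle v,v^{*}\rangle=1$, the condition reduces, after a short calculation, to the functional equation $\langle v-u,\,w(u)+w(v)\rangle=0$ for all $u,v\in\bd M$, where $w(u)=h_M(u^{\perp})\,\nabla g(u)$. I would exploit this without differentiating: summing the equation over the three pairs of a triple $u,v,p$ telescopes to $\langle v-p,w(u)\rangle+\langle p-u,w(v)\rangle+\langle u-v,w(p)\rangle=0$, and fixing $p$ and subtracting off $w(p)$ turns this into $\langle v-p,w(u)-w(p)\rangle=\langle u-p,w(v)-w(p)\rangle$. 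For fixed $u$ the left-hand side is affine in $v$, and letting $u-p$ range over two independent directions forces $w$ to be the restriction to $\bd M$ of an affine map; the homogeneity of $w$ together with the central symmetry $w(-u)=-w(u)$ removes the constant term, so $w(u)=Bu$ for a fixed matrix $B$. Since $w(u)$ is a positive multiple of the outer normal $\nabla g(u)$, the linear field $x\mapsto Bx$ is everywhere normal to $\bd M$; hence $\oint\langle B\gamma,d\gamma\rangle=0$ along $\bd M$, and as the symmetric part of $B$ contributes $0$ to this integral while its antisymmetric part contributes a nonzero multiple of $\area(M)$, the matrix $B$ must be symmetric. Then $\langle x,Bx\rangle$ is constant along $\bd M$ and $B$ is positive definite, so $M$ is an ellipse.

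I expect the main obstacle to be the bookkeeping that produces the correct explicit bisector directions — in particular fixing the orientation in the Glogovskij direction $h_M(v^{\perp})u+h_M(u^{\perp})v$ — and then the reduction of $\ref{thm:billiard}.1$ to the clean functional equation for $w$; once that equation is in hand, the telescoping device is what lets me extract the linearity of $w$ using only the assumed smoothness rather than a second derivative. Finally, as in the excerpt, I would invoke the density of $C^{\infty}$ strictly convex norms together with continuity to justify the geometric interpretation of billiard reflections on which the formula for the billiard direction rests.
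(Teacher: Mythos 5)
Your proposal is correct, but it follows a genuinely different route from the paper. The paper normalizes by an affine map so that a circumscribed parallelogram whose side-midpoints lie on $M$ becomes the standard square, writes the relevant arc of $\bd M$ as a graph $y=f(x)$, and converts each coincidence of bisectors into an explicit first-order ODE: for $A_b=A_G$ this ODE factors and its nontrivial factor is exactly the equation of Lemma~\ref{lem:ellipse} (already used for Theorem~\ref{thm:equaltangent}), whose integral curves are conic arcs, after which four arcs are glued into a single conic by analyticity and a second application of the argument to a rotated circumscribed rectangle; for $A_b=A_B$ the ODE is $y'=-xy/(1-x^2)$, solved directly. You instead work coordinate-free on $\bd M$: you identify the three bisector directions as $u+v$, the pole $P(u,v)$ of the tangents, and $h_M(v^{\perp})u+h_M(u^{\perp})v$ (all of which I checked are consistent with what the paper uses implicitly), and turn each hypothesis into a symmetric functional identity --- $\langle v,\nabla g(u)\rangle=\langle u,\nabla g(v)\rangle$ in case \ref{thm:billiard}.2, which makes the gradient of $g^2/2$ a symmetric linear map so that $g^2$ is a quadratic form; and $\langle v-u,\,w(u)+w(v)\rangle=0$ in case \ref{thm:billiard}.1, where your telescoping/polarization step shows the weighted normal field $w$ is linear and the circulation integral $\oint\langle B\gamma,\dif\gamma\rangle=0$ eliminates the antisymmetric part. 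Your approach buys a proof with no ODE integration, no case analysis on $x^2+y^2\gtrless 1$, and no patching of four conic arcs, and it makes transparent that only $C^1$ regularity of $\bd M$ is used; the paper's approach is more computational but reuses Lemma~\ref{lem:ellipse} and so shares machinery with Theorem~\ref{thm:equaltangent}. Two small points you should make explicit when writing this up: the identities are first derived only for pairs $u,v$ for which the pole exists and the proportionality constants are nonzero, and then extended to all of $\bd M\times\bd M$ by continuity; and in computing the Glogovskij direction you replace distance to a ray by distance to its supporting line, which is legitimate for the points that actually lie on the bisector ray but deserves a sentence (the paper glosses over the same point).
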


We note that, by a result of D\"uvelmeyer \cite{D04}, Busemann and 
Glogovskij angular bisectors coincide if, and only if, the plane is a Radon plane.

The definition of billiards for normed planes gives rise to numerous possible questions, among which we mention one.
It is well-known that in the Euclidean plane the \emph{pedal triangle} of an acute triangle $T$
(that is the triangle connecting the base points of the altitudes of $T$)
is a billiard triangle; in fact, this is the only billiard triangle in $T$.
This billiard triangle is called the \emph{Fagnano orbit}.
Using the definition of billiard reflections, Fagnano orbits can be generalized
for normed planes (cf. \cite{GT02}).
On the other hand, we may define the altitudes of $T$ in a natural way:
For a triangle $T$ with vertices $a,b$ and $c$, $[a,h_a]$ is called an \emph{altitude} of $T$,
if, in terms of Birkhoff orthogonality, $a-h_a$ is \emph{normal} to $b-c$
(for Birkhoff orthogonality, cf. \cite{T96}).

Our question is the following.

\begin{ques}
Let $\M$ be a strictly convex, smooth norm. Prove or disprove that if the pedal triangle
of any triangle $T$, if it exists, is a billiard triangle in $T$, then $M$ is an ellipse.
\end{ques}

In Sections~\ref{sec:equaltangent} and \ref{sec:billiard} we present the proofs of
Theorems~\ref{thm:equaltangent} and \ref{thm:billiard}, respectively.

\section{Proof of Theorem~\ref{thm:equaltangent}}\label{sec:equaltangent} 

In the proof we use the following definition and lemma.
First, set $W = \{ (x,y) \in \Re^2: x < 1, y < 1, x+y > 1 \}$.

\begin{defn}
Let $\F$ denote the family of curves, with endpoints $(0,1)$ and $(1,0)$,
that are contained in the triangle $W$ and can be obtained as
an arc of the boundary of a strictly convex, smooth disk
with supporting half planes $x \leq 1$ and $y \leq 1$.
\end{defn}

\begin{lem}\label{lem:ellipse}
If $G \in \F$ is an integral curve of the differential equation
\begin{equation}\label{eq:C2gen}
y' (1-x)(x-y+1) + (1-y)(y-x+1) = 0
\end{equation}
for $0 < x < 1$, then $G$ is a conic section arc satisfying
\begin{equation}\label{eq:solution}
x^2+y^2-1+c(x-1)(y-1)=0
\end{equation}
for some $c \in \Re$ with $c < 2$.
\end{lem}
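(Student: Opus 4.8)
The plan is to recognize the one-parameter family \eqref{eq:solution} as a pencil of conics solving \eqref{eq:C2gen}, and then to invoke uniqueness of solutions of the first-order equation to conclude that every integral curve in $\F$ belongs to this pencil; the restriction $c<2$ will follow from the fact that $G$ lies in $W$.

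First I would rewrite \eqref{eq:C2gen} in the symmetric form $M\,\dif x + N\,\dif y = 0$ with $M=(1-y)(y-x+1)$ and $N=(1-x)(x-y+1)$, and observe that on $W$ one has $x>1-y>0$ and $y<1<x+1$, so $N>0$ there; hence the equation defines a smooth direction field on $W$ and $G$ is locally a graph $y=y(x)$ with slope $y'=-M/N$. I would then verify by implicit differentiation that each conic \eqref{eq:solution} is an integral curve: solving \eqref{eq:solution} for $c=\frac{1-x^2-y^2}{(x-1)(y-1)}$ and substituting gives $2x+c(y-1)=\frac{(x-y-1)(x+y-1)}{x-1}$ and $2y+c(x-1)=\frac{(y-x-1)(x+y-1)}{y-1}$, so that the implicit slope $y'=-\frac{2x+c(y-1)}{2y+c(x-1)}$ collapses exactly to $-M/N$. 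Thus every member of the pencil solves \eqref{eq:C2gen}.

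Next I would use uniqueness. Every conic \eqref{eq:solution} passes through $(0,1)$ and $(1,0)$, with tangent lines $y=1$ and $x=1$ there, and through a given interior point $(x_0,y_0)\in W$ passes exactly one of them, namely the one with $c=\frac{1-x_0^2-y_0^2}{(x_0-1)(y_0-1)}$. Given an integral curve $G\in\F$, I pick such a point and let $\mathcal C$ be the conic of the pencil through it. Since the direction field is locally Lipschitz on $W$, the curves $G$ and $\mathcal C$ agree near that point; the set where they agree is closed and, by local uniqueness at each of its points, open in the interior of $G$, hence all of it by connectedness, and the endpoints match by continuity. Therefore $G$ satisfies \eqref{eq:solution} for this $c$, and in particular $G$ is a conic section arc.

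Finally, for the bound I would rewrite \eqref{eq:solution} as $(x+y-1)^2+(c-2)(x-1)(y-1)=0$. At any interior point of $G$ we have $(x+y-1)^2>0$ and $(x-1)(y-1)=(1-x)(1-y)>0$, since the point lies in $W$; the identity then forces $c-2<0$, that is $c<2$. I expect the main obstacle to be the bookkeeping in the uniqueness step, namely confirming that the direction field is genuinely regular throughout $W$ so that the local-to-global continuation is valid, while keeping in mind that the two base points $(0,1)$ and $(1,0)$ of the pencil are precisely the common zeros of $M$ and $N$ and therefore must be handled by closure rather than by direct integration of \eqref{eq:C2gen}.
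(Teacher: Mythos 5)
Your argument is correct, but it takes a genuinely different route from the paper's. The paper integrates \eqref{eq:C2gen} directly: it multiplies by the integrating factor $(x^2+y^2-1)^{-3/2}$ (respectively $(1-x^2-y^2)^{-3/2}$) to make the form exact, invokes Poincar\'e's Lemma to produce a potential function, and is therefore forced into a three-case analysis according to the sign of $x^2+y^2-1$ along $G$, including a separate density-and-continuity argument for the case where $G$ meets the unit circle in a dense set (there the integrating factor degenerates); it then obtains the restriction on the parameter by computing the eigenvalues of the conic's matrix and discarding those conics whose arcs in the strip cannot be convex. You instead verify by implicit differentiation that every member of the pencil \eqref{eq:solution} solves \eqref{eq:C2gen}, note that exactly one member passes through each point of $W$ (possible since $(x-1)(y-1)\neq 0$ there), observe that the direction field $y'=-M/N$ is smooth on $W$ because $N=(1-x)(x-y+1)>0$ there, and conclude by Picard--Lindel\"of uniqueness together with the usual open-closed continuation. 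This replaces the paper's case split by a single uniform argument in which the circle appears simply as $c=0$, at the price of having to know the answer in advance rather than deriving it; your identity $(x+y-1)^2+(c-2)(x-1)(y-1)=0$ combined with positivity of both terms on $W$ also gives $c<2$ more directly than the paper's convexity-based exclusion of the degenerate and concave cases. The one point worth making explicit in your write-up is that $2y+c(x-1)=\frac{(y-x-1)(x+y-1)}{y-1}$ is nonzero throughout $W$, so the conic is genuinely a local graph at every interior point of $G$ and the uniqueness theorem applies; with that noted, the proof is complete.
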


\begin{proof}
Note that as $G$ is an arc of the boundary of a smooth convex body, the function representing it
is continuously differentiable for $0 < x < 1$.
We distinguish three cases.

\emph{Case 1}, $x^2+y^2 > 1$ in some subinterval of $(0,1)$.
Multiplying both sides of (\ref{eq:C2gen}) by $(x^2+y^2-1)^{-3/2}$, we may write the equation in the differential form
\begin{equation}\label{eq:positive}
(x^2+y^2-1)^{-3/2} (1-x)(x-y+1) \dif y + (x^2+y^2-1)^{-3/2} (1-y)(y-x+1) \dif x = 0.
\end{equation}

Note that the coefficients of (\ref{eq:positive}) are continuously differentiable functions of $(x,y)$
in the (simply connected) subset of $W$ satisfying $x^2+y^2 > 1$.
Furthermore, it satisfies Schwarz's Theorem, and thus, by Poincar\'e's Lemma, it has a potential function $F(x,y)$.

Using standard calculus, it is easy to check that any continuously differentiable potential function
satisfies
\begin{equation}\label{eq:implicite}
(1-\bar{c}^2)x^2+(1-\bar{c}^2)y^2+2xy-2x-2y+1+\bar{c}^2 = 0
\end{equation}
for some $\bar{c} \in \Re$.
Thus, $G$ is a conic section arc.
An elementary computation shows that the eigenvalues of its matrix are $2-\bar{c}^2$ and $-\bar{c}^2$ with eigenvectors
$(1,1)$ and $(-1,1)$, respectively.
Thus, it defines an ellipse if $\bar{c}^2>2$, a parabola if $\bar{c}^2=2$, a hyperbola
with a convex arc in $0 < x < 1$ if $1 < \bar{c}^2 < 2$, the union of the lines $y=1$ and $x=1$ if $\bar{c}^2=1$,
and a hyperbola with a concave arc in $0 < x < 1$ if $0 \leq \bar{c}^2 < 1$.
These last two possibilities are clearly impossible, thus we have $1<\bar{c}^2$.
Now, introducing the notation $c = \frac{2}{1-\bar{c}^2} < 0$, we may rewrite (\ref{eq:implicite})
as in (\ref{eq:solution}) with $c < 0$.

Note that these integral curves have no common points with the circle $x^2 + y^2 = 1$, and thus,
by the continuity of $G$, we have that any curve $G$ satisfying the condition of Case 1 is an arc of
one of the curves in (\ref{eq:solution}) with some $c < 0$.

\emph{Case 2}, $x^2+y^2 < 1$ in some subinterval of $(0,1)$.
After multiplying both sides by $(1-x^2-y^2)^{-3/2}$, we may apply an argument similar to the one in Case 1
to obtain the curves in (\ref{eq:solution}) with $0< c < 2$.

\emph{Case 3}, otherwise.
Consider a subinterval of $(0,1)$. By our condition, in this interval $G$
contains a point of the unit circle $x^2+y^2 = 1$.
Thus, the $x$-coordinates of the intersection points of $G$ and the circle is an everywhere dense set in $(0,1)$,
which, by the continuity of $G$, yields that $G$ is an arc of $x^2+y^2=1$.
\end{proof}

\begin{proof}[Proof of Theorem~\ref{thm:equaltangent}]
First, we show the assertion for $n=2$.
To show that $K$ is strictly convex, we may use the idea in \cite{W08}.
More specifically, suppose for contradiction that $K$ is not strictly convex,
and consider a segment $[p,q] \subset \bd K$
that is maximal with respect to inclusion. Without loss of generality, we may assume that the line containing
$[p,q]$ does not separate $K$ and $o$.
Then the point $r = \lambda \left(\frac{1}{3}p + \frac{2}{3}q \right)$ is an exterior point of $K$
for any $\lambda > 1$. On the other hand, if $\lambda$ is sufficiently close to $1$,
then there are no two equal tangent segments of $K$ from $r$; a contradiction.

Now we show that $K$ is smooth.
Suppose for contradiction that $p$ is a nonsmooth point of $\bd K$; that is, there are two lines $L_1$ and $L_2$ tangent to $K$ at $p$.
Consider two parallel supporting lines $L$ and $L'$ of $K$ which intersect both $L_1$ and $L_2$.
Let $q$ and $q'$ be the tangent points of $L$ and $L'$, respectively, and for $i \in \{ 1, 2\}$, let
$r_i$ and $r_i'$ be the intersection points of $L_i$ with $L$ and $L'$, respectively.
We choose the indices in a way that $r_1 \in [q,r_2]$, which yields that $r_2' \in [q,r_1']$ (cf. Figure~\ref{fig:nonsmooth}).
Observe that, applying the equal tangent property to $r_1$ and $r_2$, we have that 
\begin{equation}\label{eq:lefthandside}
||r_1-r_2||_\M + ||r_1-p||_\M = ||r_2-p||_\M .
\end{equation}
We obtain similarly that
\begin{equation}\label{eq:righthandside}
||r_1'-r_2'||_\M + ||r_2'-p||_\M = ||r_1'-p||_\|M .
\end{equation}

\begin{figure}[here]
\includegraphics[width=0.6\textwidth]{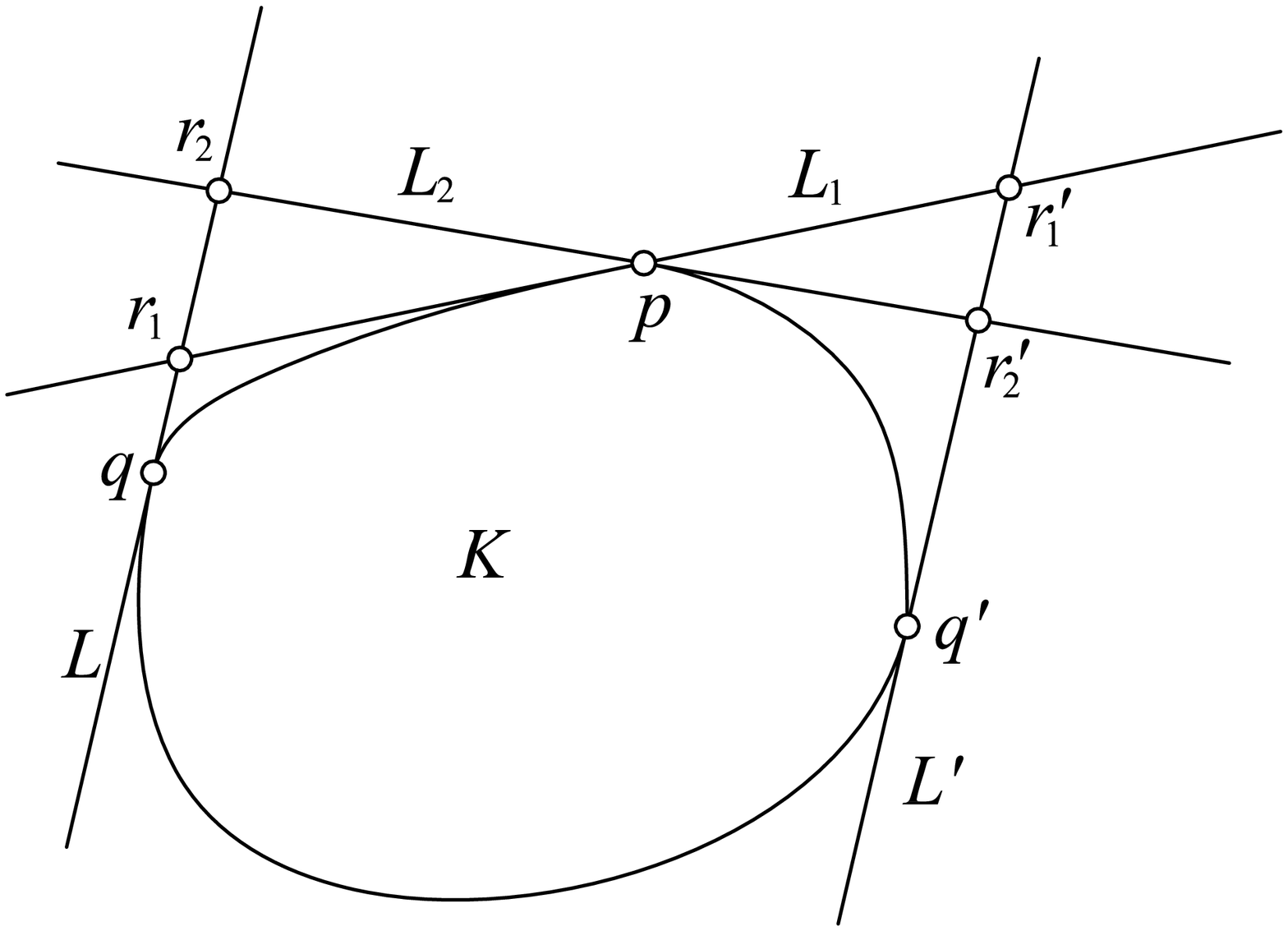}
\caption[]{}
\label{fig:nonsmooth}
\end{figure}

Clearly, the triangle  $\conv \{ p,r_1',r_2'\}$ is a homothetic copy of $\conv \{ p,r_1,r_2\}$, and thus, a there is a value $\lambda > 0$
such that $||r_1'-p||_\M = \lambda ||r_1-p||_\M$, $||r_2'-p||_\M = \lambda ||r_2'-p||_\M$ and $||r_2'-r_1'||_\M = \lambda ||r_2-r_1||_\M$.
After substituting these expressions into (\ref{eq:righthandside}) and simplifying by $\lambda$,
we immediately obtain that $||r_1-r_2||_\M + ||r_2-p||_\M = ||r_1-p||_\|M$, which contradicts (\ref{eq:lefthandside}).

In the remaining part, we assume that $K$ is strictly convex and smooth.
Consider a parallelogram $S$ circumscribed about $M$ with the property that the midpoints of its sides
belong to $M$; such a parallelogram exists: for instance, a smallest area parallelogram circumscribed about $M$
has this property.

Note that for any affine transformation $h$, $K$ has the equal tangent property with respect to $M$ if and only
if $h(K)$ has this property with respect to $h(M)$.
Hence, we may assume that $S$ is the square with vertices $(1,1)$,$(-1,1)$, $(-1,-1)$ and $(1,-1)$.

Consider the axis-parallel rectangle $R$ circumscribed about $K$.
Note that by the equal tangent property, the two tangent segments from any vertex of $R$ are of equal Euclidean length.
Thus, $R$ is a square, and the tangent points on its sides are symmetric to its two diagonals.
Since any homothetic copy of $K$ satisfies the equal tangent property, we may assume that
the points $(1,0)$ and $(0,1)$ are common points of $\bd K$ and $\bd R$ and that
the half planes $x \leq 1$ and $y \leq 1$ support $K$.

Consider a point $p_0=(x_0,y_0)$ of $\bd K$ with $0 < x_0 < 1$ and $y_0 > 0$,
and let $s_0<0$ denote the slope of the tangent line $L_0$ of $K$ at $p_0$.
The equation of $L_0$ is:
\[
y = s_0 (x-x_0) + y_0 .
\]
This line intersects the line $x=1$ at the point $q_1 = (1,y_0+s_0(1-x_0))$ and the line $y=\beta$
at $q_2 = \left( x_0 + \frac{\beta-y_0}{s_0},\beta \right)$.

\begin{figure}[here]
\includegraphics[width=.8\textwidth]{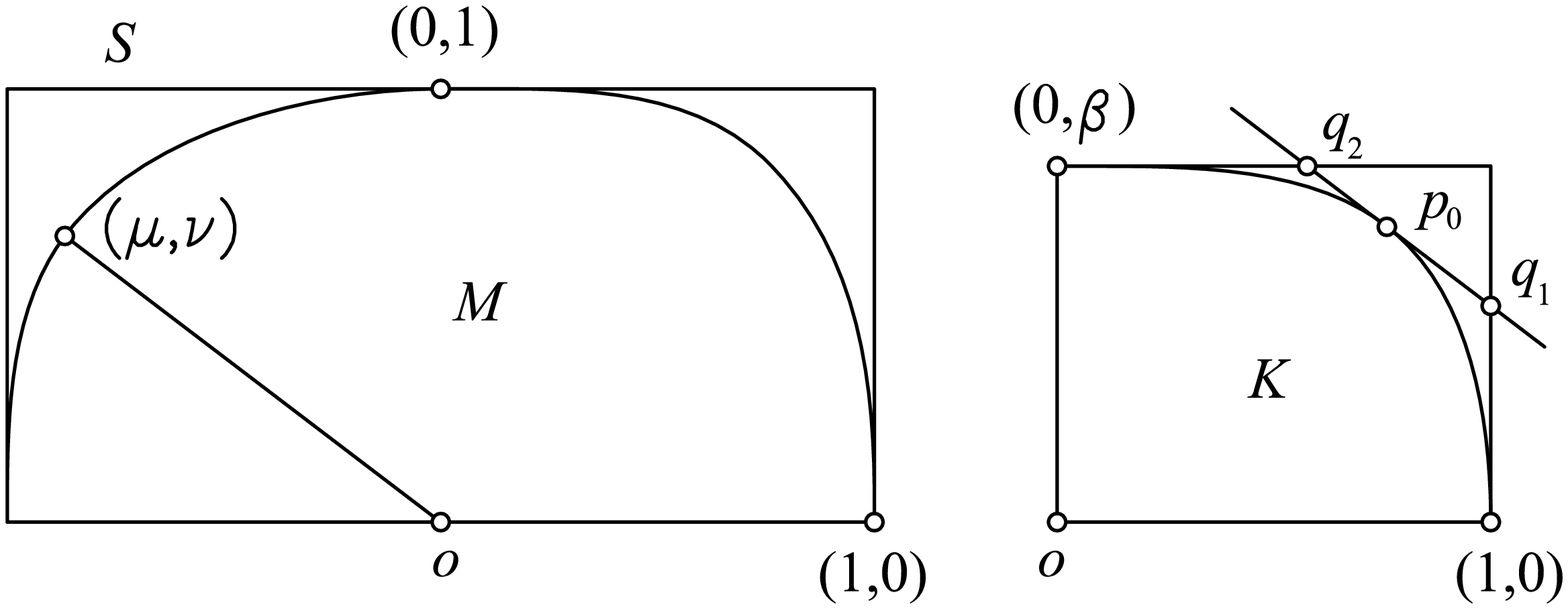}
\caption[]{}
\label{fig:tangentode}
\end{figure}

Let $(\mu,\nu)\in \bd M$ be the point with $\nu = s_0 \mu$ and $\nu > 0$ (cf. Figure~\ref{fig:tangentode}).
Thus, for the normed lengths of the two tangent segments of $K$ from $q_1$, we have
\[
y_0+s_0(1-x_0) = \frac{1-x_0}{-\mu}
\]
and for the ones from $q_2$, we obtain similarly that
\[
 x_0 + \frac{1-y_0}{s_0} = \frac{1-y_0}{s_0 \mu}
\]

Eliminating $\mu$, we have
\[
s_0 \left( \frac{x_0}{1-y_0} + 1\right) = - \left( \frac{y_0}{1-x_0} + 1 \right),
\]
which yields that the function representing the arc of $\bd K$ with $0<x<1$ and $y>0$
satisfies the differential equation
\[
y' (1-x)(x- y + 1) + (1-y)(y - x + 1) = 0.
\]

To determine $\bd K$, we need to find the integral curves of this differential equation
in $\F$.
By Lemma~\ref{lem:ellipse}, we have that the arc of $\bd K$ between $(1,0)$ and $(0,1)$ is
of the form $x^2+y^2-1+c(x-1)(y-1)=0$ for some $c < 2$, and thus, in particular, it is symmetric about
the line $y=x$.

Consider the point $u=(\zeta,\omega)$ of this arc, and let $t=(1,\tau)$ be the point where the tangent
line of $K$ at $u$ intersects the line $x=1$.
Then, by the axial symmetry of the arc, we have that the tangent line of $K$ at $u'=(\omega,\zeta)$
intersects the line $y=1$ at $t'=(\tau,1)$.
Note that by the equal tangent property, the $M$-lengths of the segments $[u,v]$ and $[u',v']$ are both equal to $\tau$.
Since they are of equal Euclidean lengths as well, it follows that the radii of $M$ in their directions are of equal Euclidean lengths,
which yields that the union of the arcs of $\bd M$ in the second and the fourth quadrants is symmetric about the line $y=x$.
Thus, the two supporting lines of $M$, parallel to the line $y=x$, touch $M$ at points on the line $y=-x$.
We may obtain similarly that the supporting lines of $M$, parallel to the line $y=-x$, touch $M$ at points on the line $y=x$.
Then the rectangle $S'$, with these four supporting lines as its sidelines, satisfies the property that the midpoints of its sides belong to $M$.
Hence, we may repeat our argument with $S'$ in place of $S$, and obtain that $\bd K$ is the union of another four conic section arcs.
Recall that conic section arcs are analytic, and thus, we have that the four arcs belong to the same conic section, which yields, by the compactness of $K$,
that $K$ is an ellipse.

Now, consider an affine transformation $h$ that transforms $K$ into a Euclidean disk.
We may assume that the centre of both $h(K)$ and $h(M)$ is the origin.
Consider any line $L$ passing through $o$.
If $p$ is a point on $L$ with $p \notin h(K)$ such that the two tangent lines of $h(K)$
containing $p$ touch $h(K)$ at $q_1$ and $q_2$,
then the Euclidean lengths of the segments $[p,q_1]$ and $[p,q_2]$ are equal.
Since their $h(M)$-normed lengths are also equal, we have that the two radii of $h(M)$ in the directions
of $[p,q_1]$ and $[p,q_2]$ have equal Euclidean lengths.
As $p$ was an arbitrary point, it follows that $h(M)$ is axially symmetric to $L$.
Thus, $h(M)$ is axially symmetric to any line passing through $o$, and hence it is a Euclidean disk.
This yields the assertion for the plane.

Finally, we examine the case $n > 2$.
Consider any section of $K$ with a plane $P$, and the intersection of $M$ with the plane $P'$ parallel to $P$
that contains the origin.
We may apply our previous argument, and obtain that these planar sections are homothetic ellipses.
Thus, a well-known theorem of Burton \cite{B76} yields that $M$ and $K$ are ellipsoids,
from which the assertion readily follows.
\end{proof}

\section{Proof of Theorem~\ref{thm:billiard}}\label{sec:billiard}

Clearly, if $M$ is an ellipse, then all the three angular bisectors coincide.

First, let $\M$ be a norm in $\Re^2$, and assume that
$A_b([a,b\rangle, [a,c\rangle) = A_G([a,b\rangle, [a,c\rangle)$ for any
$[a,b\rangle, [a,c\rangle \subset \Re^2$.

Since $M$ is centrally symmetric, there is a parallelogram $S$ such that the midpoints of its sides belong to $M$.
We may apply an affine transformation such as in the proof of Theorem~\ref{thm:equaltangent}, and hence,
we may assume that $S$ is the square with vertices $(1,1)$, $(1,-1)$, $(-1,-1)$ and $(-1,1)$.

Let the arc of $\bd M$ in the quadrant $x\geq 0$ and $y \geq 0$ be defined as the graph
of a function $y=f(x)$.
Note that $f$ is continuously differentiable for $0<x<1$, and that
$f(1)=0$, $f(0)=1$ and $f'(0) = 0$.

Consider the point $p_0=(x_0,f(x_0))$, where $0<x_0<1$.
Let $f(x_0) = y_0$, and $f'(x_0) = s_0$.
Then the equation of the tangent line of $M$ at $p_0$ is
\[
y-y_0 = s_0(x-x_0)
\]
Let $q_1$ and $q_2$ be the points where this line intersects the lines $x=1$ and $y=1$, respectively.
Then
\[
q_1 = \big( 1,y_0+s_0(1-x_0) \big) \textrm{ and } q_2 = \left(x_0+\frac{1-y_0}{s_0}, 1 \right) .
\]

Observe that $q_1$ is equidistant from the $x$-axis and the line $y = \frac{y_0}{x_0} x$.
Clearly, the normed distance of $q_1$ and the $x$-axis is $y_0 + s_0(1-x_0) > 0$.
Thus, the disk $q_1 + (y_0 + s_0(1-x_0))M$ touches the line $y = \frac{y_0}{x_0} x$,
or in other words, the line $L_1$ with equation
$y+1=\frac{y_0}{x_0} \left( x+\frac{1}{y_0+s_0(1-x_0)} \right)$ touches $M$.

From the observation that $q_2$ (and thus also $-q_2$) is equidistant from the $y$-axis and the line  $y= \frac{y_0}{x_0} x$,
we obtain similarly (for $-q_2$) that the line $L_2$ with equation $y - \frac{1}{x_0 + \frac{1-y_0}{s_0}} = \frac{y_0}{x_0} (x-1)$ touches $M$.
It is easy to check that then $L_1$ and $L_2$ coincide, and thus, that their $y$-intercepts are equal.
Hence,
\[
\frac{y_0}{x_0} \cdot \frac{y_0+s_0(1-x_0)+1}{y_0+s_0(1-x_0)} = \frac{s_0x_0+1-y_0+s_0}{s_0 x_0+1-y_0},
\]
which yields the differential equation
\[
\frac{y}{x} \cdot \frac{y+y'(1-x)+1}{y+y'(1-x)} = \frac{y'x +1-y+y'}{xy'+1-y}.
\]

Simplifying, we obtain that
\[
\big(y' (1-x)(x-y+1) + (1-y)(y-x+1)\big)(y' x -y) = 0
\]

If the second factor is zero in some interval, then in this interval $y=cx$ for some constant $c \in \Re$.
This curve cannot be contained in the boundary of an $o$-symmetric convex disk.
Thus, the first factor is zero in an everywhere dense set,
which yields, by continuity, that it is zero for every $0<x<1$.
Hence, by Lemma~\ref{lem:ellipse}, we obtain that the part of $\bd M$ in the first quadrant
is a conic section arc. Applying a similar argument for the arcs in the other quadrants,
we obtain that $\bd M$ is the union of four conic section arcs,
with the lines $y=x$ and $y=-x$ as axes of symmetry.
We remark that these arcs are analytic curves.

Let $R$ be the rectangle, with its sides parallel to the lines $y=x$ and $y=-x$,
that is circumscribed about $M$.
Observe that the midpoints of the sides of $R$ are points of $M$.
Thus, we may apply our argument for $R$ in place of $S$, which yields, in particular, that $\bd M$ is analytic;
that is, that $\bd M$ belongs to the same conic section: an ellipse that contains the points $(1,0)$, $(0,1)$,
$(-1,0)$ and $(0,-1)$ and is contained in the square $S$.
There is only one such ellipse: the circle with equation $x^2+y^2 = 1$; from which the assertion follows.

We are left with the case that (\ref{thm:billiard}.2) yields (\ref{thm:billiard}.3).
Assume that  $A_b([a,b\rangle, [a,c\rangle) = A_B([a,b\rangle, [a,c\rangle)$ for any
$[a,b\rangle, [a,c\rangle \subset \Re^2$.
Consider any point $p \in \bd M$. Without loss of generality, we may assume that $p=(1,0)$,
and that the lines $x=1$ and $y=1$ are tangent lines of $M$.
Since $M$ is smooth, for $-1< x<1$ and  $y > 0$, $\bd M$ is the graph of
a continuously differentiable function $y=f(x)$.
Then, with the notations $y_0 = f(x_0)$ and $s_0 = f'(x_0)$,
the equation of the tangent line $L$ of $M$ at the point $q=(x_0,y_0)$
intersects the line $x=1$ at the point $u=(1,y_0+s_0(1-x_0))$.
This point is on the billiard bisector of $[o,p\rangle$ and $[o,q\rangle$.
Furthermore, the point $v=\left( 1+x_0, y_0 \right)$
is a point of the Busemann bisector of $[o,p\rangle$ and $[o,q\rangle$.
Thus, we have
\[
\frac{y_0}{1+x_0} = y_0+s_0(1-x_0),
\]
which leads to the differential equation
\[
\frac{-x}{1-x^2}y = y'.
\]
An elementary computation shows that the solutions of this differential equation are
$y=c \sqrt{1-x^2}$ for some $c > 0$.
Since $y=1$ is a tangent line of  $M$, we have that $c=1$, which, by the symmetry of $M$,
yields the assertion.

{\bf Acknowledgements.}
The author is indebted to E. Makai Jr., Sz. Szab\'o, B. Szil\'agyi and S. Tabachnikov for their valuable remarks regarding the topics in the paper, and particularly to an anonimous referee
whose comment led to a more general form of Theorem~\ref{thm:equaltangent} with a simpler proof.

\end{document}